\theoremstyle{plain}
\newtheorem{theorem}{Theorem}[section]
\theoremstyle{definition}
\newtheorem{definition}[theorem]{Definition}
\newtheorem{lemma}[theorem]{Lemma}
\newtheorem{corollary}[theorem]{Corollary}
\newenvironment{renumerate}%
{%
\begin{enumerate}}%
{\end{enumerate}%
}%
\def\Ddots{\mathinner{\mkern1mu\raise\p@
\vbox{\kern7\p@\hbox{.}}\mkern2mu
\raise4\p@\hbox{.}\mkern2mu\raise7\p@\hbox{.}\mkern1mu}}
\newcommand{\R}{\text{${\mathbb R}$}}
\newcommand{\C}{\text{$\mathbb C$}}
\newcommand{\Z}{\text{$\mathbb Z$}}
\newcommand{\T}{\text{$\mathbb{T}$}}
\newcommand{\TM}{\text{$\mathbb{T}M$}}
\renewcommand{\frak}[1]{\text{$\mathfrak{#1}$}}
\newcommand{\V}{\text{$\mathbb{V}$}}
\newcommand{\J}{\text{$\mathcal{J}$}}
\newcommand{\JJ}{\text{$\mathbb{J}$}}
\newcommand{\G}{\text{$\mathcal{G}$}}
\newcommand{\U}{\text{$\mathcal{U}$}}
\newcommand{\Spin}[1]{\mathrm{Spin}({#1})}
\newcommand{\spin}[1]{\mathfrak{spin}({#1})}
\newcommand{\Rho}{P}
\renewcommand{\bar}{\overline}
\newcommand{\gf}{\text{$\varphi$}}
\newcommand{\Id}{\mathrm{Id}}
\newcommand{\deltabar}{\text{$\overline{\delta}$}}
\newcommand{\tensor}{\otimes}
\newcommand{\mc}[1]{\text{$\mathcal{#1}$}}
\newcommand{\into}{\longrightarrow}
\newcommand{\noqed}{\let\qed\relax}
\newcommand{\IP}[1]{\langle #1 \rangle}
\newcommand{\nhood}{neighbourhood}
\newcommand{\gcs}{generalized complex structure}
\newcommand{\gacs}{generalized almost complex structure}
\newcommand{\gcss}{generalized complex structures}
\newcommand{\gc}{generalized complex}
\newcommand{\gk}{generalized K\"ahler}
\newcommand{\gks}{generalized K\"ahler structure}
\newcommand{\gkss}{generalized K\"ahler structures}
\newcommand{\gkm}{generalized K\"ahler manifold}
\newcommand{\gkms}{generalized K\"ahler manifolds}
\newcommand{\wrt}{with respect to}
\newcommand{\Cour}[1]{[\![#1]\!]}
\date{} \usepackage{color} \definecolor{tocolor}{rgb}{.1,.1,.5}
\definecolor{urlcolor}{rgb}{.2,.2,.6}
\definecolor{linkcolor}{rgb}{.1,.1,.6}
\definecolor{citecolor}{rgb}{.6,.2,.1}
\numberwithin{equation}{section}
\author{Gil R. Cavalcanti\footnote{Utrecht University; gil.cavalcanti@gmail.com.}}
\title{Goto's generalized K\"ahler stability theorem}
\begin{document}

\maketitle

\abstract{In these notes we give a shortened and more direct proof of Goto's generalized K\"ahler stability theorem stating that if $(\J_1,\J_2)$ is a \gks\ for which $\J_2$ is determined by a nowhere vanishing closed form, then small deformations of $\J_1$ can be coupled with small deformations of $\J_2$ so that the pair remains a \gks.}

\section*{Introduction}

Generalized K\"ahler structures were introduced in 2003 by Gualtieri \cite{gualtieri-2003} and raised immediate interest accross fields  as  Gualtieri proved that the \gk\ condition is precisely equivalent to the conditions required on the target space of a $(2,2)$-supersymmetric sigma model discovered by Gates, Hull and Ro\v cek \cite{MR776369}. Despite of the fact that the bi-Hermitian structures  of Gates, Hull and Ro\v cek had been around for 20 years, there were only a handful of known examples which were not outright K\"ahler and, in the following years, we saw a march towards finding interesting examples using \gc\ insights. 

The most successful method for constructing such examples was by deforming a usual K\"ahler structure into a generalized one. Indeed, Gualtieri showed in his thesis that a complex structure can be transformed into a (non complex) generalized complex structure by use of a holomorphic Poisson bivector. The basic idea for the \gk\ case was to use a holomorphic Poisson bivector to  deform the complex structure and show that the symplectic structure could also be deformed so that the pair remained \gk. This idea was first implemented by  Hitchin  in early 2005 \cite{MR2217300}. There, he produced two examples of such structures: one on $\C P^2$ and one on $\C P^1 \times \C P^1$. Later in the same year, these examples were extended to several toric varieties by Lin and Tolman \cite{LiTo06} using a quotient construction and, in 2006,  Hitchin extended the construction to arbitrary Poisson bivectors on del Pezzo surfaces \cite{MR2371181}. The question was then quite neatly settled in 2007, when Goto  \cite{MR2669364} proved that any small deformation of the complex structure of a compact K\"ahler manifold  can be completed to a deformation of the whole generalized K\"ahler structure. Goto's result goes beyond the search of examples. In content, it is an analogue  to Kodaira and Spencer's stability theorem of K\"ahler structures \cite{MR0115189}. Precisely, Goto showed that given a \gks\ $(\J_1,\J_2)$ on a compact manifold for which $\J_2$ is determined by a closed form, then any deformation of $\J_1$ can be completed by a deformation of $\J_2$ so that the deformed pair is still \gk.

In these notes we review the proof of Goto's theorem. While the heart the argument is still the same, we use results regarding Hodge theory of \gkms\ \cite{gualtieri-2004} more judiciously as well as  new results regarding the intrinsic torsion of a generalized almost Hermitian manifold  \cite{cavalcantiskt} to clear nearly all of the setup used by Goto and produce a much cleaner and clearer proof.

{\bf Acknowledgements}: This research was supported by a Marie Curie intra-european fellowship.

\section{Linear algebra}\label{sec:linear algebra}

Given a vector space $V^m$ we let $\V = V \oplus V^*$ be its ``double".  $\V$ is endowed with a natural symmetric pairing:
$$\IP{X+\xi,Y+\eta} =\tfrac{1}{2}(\eta(X) + \xi(Y)),\qquad X,Y \in V;~\xi,\eta \in V^*.$$

Elements of $\V$ act on $\wedge^{\bullet}V^*$ via
$$(X +\xi) \cdot \gf = i_X\gf + \xi\wedge \gf.$$
This action extends to an action of  the Clifford algebra of $\V$ making $\wedge^{\bullet} V^*$ a natural representation of the space of spinors for $Spin(\V)$.

 In particular, $\wedge^{\bullet}V^*$ comes equipped with a spin invariant pairing, the {\it Chevalley pairing}:
$$(\gf,\psi)_{Ch} = -(\gf \wedge \psi^t)_{top}, $$
where $\cdot^t$ indicates transposition, an $\R$-linear operator defined on decomposable forms by
$$(\theta_1\wedge \cdots \wedge \theta_k)^t = \theta_k \wedge \cdots \wedge \theta_1,$$
and $top$ means taking the degree $m$ component.

The group $\mathrm{Spin}(\V)$ acts on both $\mathrm{Clif}(\V)$, the Clifford algebra of $\V$, and on spinors in a compatible manner, namely, for $\gamma \in \Spin{\V}$ and $\alpha \in \mathrm{Clif}^{k}( \V)$ the action of $\gamma$ on $\alpha$ is given by Clifford conjugation
$$ \gamma_* \alpha = \gamma \alpha\gamma^{-1} \in  \mathrm{Clif}^{k}( \V).$$
And for $\gf \in   \wedge^\bullet V$ we have
\begin{equation}\label{eq:compatibility}
\gamma_* \alpha \cdot \gamma\gf = \gamma \alpha \gamma^{-1}\gamma \gf = \gamma(\alpha\gf).
\end{equation}

\begin{definition}
A {\it generalized metric} on $V$ is an automorphism $\G:\V \into \V$ which is orthogonal and self-adjoint \wrt\ the natural pairing and for which the bilinear tensor
$$\IP{\G \cdot,\cdot}:\V \tensor \V \into \R$$
is positive definite.
\end{definition}

Since $\G$ is orthogonal and self-adjoint we have  $\G^{2}= \Id$, hence  $\G$ splits $\V$ into its $\pm 1$-eigenspaces: $\V = V_+ \oplus V_-$. The projection $\pi_V:\V \into V$ gives isomorphisms $\pi:V_{\pm} \into V$.

If $V$ is endowed with an orientation, we can define a generalized {\it Hodge star operator} as follows. Since $\pi_V:V_+ \into V$ is an isomorphism, $V_+$ also inherits an orientation. Then we let $\{e_1, e_2,\cdots, e_m\}$ be a positive orthonormal basis of $V_+$, let $\star = -e_m\cdot\cdots e_2\cdot e_1 \in \mathrm{Clif}(\V)$ and define
$$\star \gf := \star \cdot \gf,$$
where $\cdot$ denotes Clifford action. With this definition, we have
\begin{equation}\label{eq:inner product}
(\gf,\star \gf)_{Ch} >0 \qquad \mbox{if}~ \gf \neq 0.
\end{equation}

 For the rest of this section we will introduce structures on $V$ which force its dimension to be even so we let $m =2n$.

\begin{definition}
A {\it \gcs} on $V$ is a complex structure on $\V$ which is orthogonal \wrt\ the natural pairing. A {\it generalized Hermitian structure} on $V$ is a pair $(\G,\J_1)$ of generalized metric and  \gcs\  such that $\J_1$ and $\G$ commute.
\end{definition}

Given a \gcs\ $\J$, we let $L$ be its $+i$-eigenspace. We have that $\J \in \spin{\V}$, hence it decomposes $\wedge^{\bullet}V^*$ into the eigenspaces of its Lie algebra action on forms. The eigenvalues of $\J$ are of the form $ik$ with $-n\leq k\leq n$ and we denote the corresponding eigenspaces by $U^{k}_\J$ or simply $U^k$ if $\J$ is clear from the context.  For $v \in L$ and $\gf \in U^k$ we have that
$$\J(v\cdot \gf) = (\J v) \cdot \gf + v \cdot \J\gf  = i(k+1) v \cdot \gf,$$
that is, Clifford action of $L$  maps $U^k$ into $U^{k+1}$ and similarly Clifford action of $\overline{L}$ maps $U^k$ into $U^{k-1}$. Hence $U^n$ corresponds to the space of forms which annihilate $L$. Since $L$ is maximal isotropic in $\V\tensor \C$, $U^n$ is a line and it completely determines $\J$.  We call $U^n$ the {\it canonical line} of $\J$.

Given a generalized Hermitian structure, the automorphism  $\J_2 = \G \J_1$ is also orthogonal and squares to $-\Id$, hence it is a \gcs. Since  $\G$ and $\J_1$  commute they induce a decomposition of $\V_\C$ into intersections of their eigenspaces:
$$V_+^{1,0} = L_{1} \cap (V_+ \tensor \C),\qquad V_-^{1,0} = L_{1} \cap (V_- \tensor \C),$$
$$V_+^{0,1} = \bar{L_{1}} \cap (V_+ \tensor \C),\qquad V_-^{0,1} =\bar{L_{1}} \cap (V_- \tensor \C),$$
where $L_1$ is the $+i$-eigenspace of $\J_1$.

Similarly,  $\wedge^{\bullet}V^*_\C$ splits as the intersections of the eigespaces of $\J_1$ and $\J_2$: $U^{p,q} = U^p_{\J_1}\cap U^q_{\J_2}$ and since the Clifford action of $L_{1}$ changes the $p$-grading and the action of  $L_{2}$ changes the $q$-grading in specific ways,  the Clifford action of elements in $V_{\pm}^{1,0}$ and $V_{\pm}^{0,1}$ changes the $(p,q)$-grading  by $\pm 1$, as illustrated in Figure \ref{fig:Clifford action}.
\begin{center}
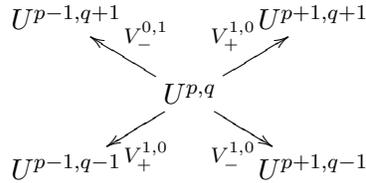
\begin{figure}[h!!]
$$\xymatrix@R=14pt@C=10pt{
& &U^{p-1,q+1} &  &U^{p+1,q+1} &&\\
&&                & \ar[lu]_{V_-^{0,1}}   \ar[ld]^{V_+^{1,0}}  U^{p,q} \ar[rd]_{V_-^{1,0}}\ar[ru]^{V_+^{1,0}}              &&& \\
& &U^{p-1,q-1} &  &U^{p+1,q-1} &&\\
}
$$
\caption{Action of elements of $V_{\pm}^{1,0}$ and $V_{\pm}^{0,1}$ on $U^{p,q}$.}\label{fig:Clifford action}
\end{figure}
\end{center}

Finally  in a generalized Hermitian manifold the generalized Hodge star is related to the action of $\JJ_i = e^{\frac{\pi \J_i}{2}}$, namely:

\begin{lemma}\label{lem:star} { (Gualtieri \cite{gualtieri-2004})} In a generalized Hermitian vector space one has
$$\star = -\JJ_1\JJ_2.$$
\end{lemma}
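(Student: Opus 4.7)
The plan is to verify the identity directly in the Clifford algebra. Since $\J_1,\J_2$ commute with $\G$ and among themselves, I fix an orthonormal basis $\{e_1,\ldots,e_{2n}\}$ of $V_+$ adapted to $\J_1$ in the sense that $\J_1 e_{2k-1}=e_{2k}$, and an analogous basis $\{f_1,\ldots,f_{2n}\}$ of $V_-$ (noting that $\IP{f_i,f_j}=-\delta_{ij}$ since $\IP{\cdot,\cdot}$ is negative definite on $V_-$). Using the bracket identity $[v_1v_2,v]=2\IP{v,v_2}v_1-2\IP{v,v_1}v_2$ in $\mathrm{Clif}(\V)$, one identifies $\J_i\in\spin{\V}$ with explicit quadratic elements of the Clifford algebra. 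The crucial input is that since $\J_2=\G\J_1$ and $\G=\pm\Id$ on $V_\pm$, we have $\J_2|_{V_+}=\J_1|_{V_+}$ while $\J_2|_{V_-}=-\J_1|_{V_-}$, so the $V_-$-contributions cancel when we add, leaving
\[\J_1+\J_2=-\sum_{k=1}^n e_{2k-1}e_{2k}.\]

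Next, since $\J_1$ and $\J_2$ commute as skew endomorphisms, they commute in $\spin{\V}$ as well, and hence $\JJ_1\JJ_2=e^{\pi(\J_1+\J_2)/2}$. The summands $e_{2k-1}e_{2k}$ pairwise commute (they share no indices) and each squares to $-1$, so the exponential factorises into a product whose terms collapse individually via $e^{-\pi B/2}=-B$ when $B^2=-1$, yielding
\[\JJ_1\JJ_2=\prod_{k=1}^n\bigl(-e_{2k-1}e_{2k}\bigr)=(-1)^n\, e_1e_2\cdots e_{2n}.\]

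Finally, reversing the product in $\star=-e_{2n}e_{2n-1}\cdots e_1$ introduces the sign $(-1)^{n(2n-1)}=(-1)^n$, whence $\star=(-1)^{n+1}e_1\cdots e_{2n}=-\JJ_1\JJ_2$. The main obstacle is sign-bookkeeping: the natural pairing is negative definite on $V_-$, which forces the spin-algebra coefficients of the $V_+$- and $V_-$-parts of $\J_i$ to carry opposite signs. It is exactly this feature that makes the $V_-$-parts of $\J_1$ and $\J_2$ cancel in the sum, leaving only a product in $V_+$ to compare against the defining expression for $\star$.
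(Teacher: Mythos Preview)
Your proof is correct. The paper does not actually prove Lemma~\ref{lem:star}; it simply attributes the result to Gualtieri \cite{gualtieri-2004} and moves on, so there is no proof in the paper to compare against. Your direct Clifford-algebra computation is exactly the standard way to establish this identity: write $\J_1,\J_2\in\spin{\V}$ explicitly as quadratic elements, observe that the $V_-$-parts carry opposite signs because the natural pairing is negative definite on $V_-$ (so the spin-algebra coefficients flip) while $\J_2|_{V_-}=-\J_1|_{V_-}$, add, exponentiate the commuting rank-two pieces, and compare with the definition $\star=-e_{2n}\cdots e_1$. One small point worth making explicit: when you say ``$\J_1$ and $\J_2$ commute in $\spin{\V}$, hence $\JJ_1\JJ_2=e^{\pi(\J_1+\J_2)/2}$'', you are using that the Lie bracket on $\spin{\V}$ is the restriction of the commutator in $\Clif(\V)$, so vanishing Lie bracket really does give $e^Ae^B=e^{A+B}$ in the Clifford algebra. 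This is true but could be stated more carefully.
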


\section{The Nijenhuis tensor and integrability}

This section we transfer to manifolds the structures defined on vector spaces in Section \ref{sec:linear algebra}. In this context we work on a manifold with closed 3-form $(M,H)$, $H \in \Omega^3_{cl}(M)$. As before, the bundle $\TM$ is endowed with the natural pairing. Further, the space of sections of $\TM$  is endowed with the {\it Courant bracket}, the derived bracket corresponding to $d^H = d + H\wedge$:
$$\Cour{X+\xi,Y+\eta}_{d^H} =[X,Y] + \mc{L}_X \eta -i_Y d\xi - i_Y i_X H.$$
If the 3-form $H$ is clear from the context we denote this bracket simply by $\Cour{\cdot,\cdot}$.

\begin{definition}[Integrability conditions]~
\begin{itemize}
\item A {\it \gacs} is a smooth assignment of a \gcs\, $\J$ to each $T_pM$ for $p \in M$ and $\J$ is {\it integrable} if its $+i$-eigenbundle is involutive \wrt\ the Courant bracket, in which case we call $\J$ a {\it \gcs} on $M$.
\item A {\it generalized almost  Hermitian structure} is a smooth assignment of a generalized Hermitian structure, $(\G,\J_1)$, to each $T_pM$ and if $\J_1$ is integrable we call it  a {\it generalized Hermitian structure}.
\item A {\it \gks} is a  generalized Hermitian structure, $(\G,\J_1)$ such that $\J_2 = \G \J_1$ is also integrable.
\end{itemize}
\end{definition} 

Integrability of a \gcs\ can be determined by the action of $d^H$ on $\U^{k}$, the sheaf of sections of $U^k$. Indeed, we have the following characterization of  the behavior of $d^H$ on generalized almost complex manifolds:

\begin{theorem}[Cavalcanti \cite{cavalcantiskt}]\label{theo:Nijenhuis}
Let $\J$ be an almost \gcs\ and let $N$ be the Nijenhuis tensor of $\J$:
$$N:\tensor^3\Gamma(\bar{L}) \into \Omega^0(M;\C)\qquad N(X,Y,Z) = -2\IP{\Cour{X,Y},Z}.$$ 
Then $N \in \Gamma(\wedge^3 L)$,
$$d^H:\U^k \into \U^{k-3}\oplus \U^{k-1}\oplus\U^{k+1}\oplus\U^{k+3}$$
and the component of $d^H$ mapping $\U^k$ into $\U^{k+3}$ is the Clifford action of $N$ on forms. Similarly, the component mapping $\U^k$ into $\U^{k-3}$ is the action of $\bar{N}$ and they are both tensorial.
\end{theorem}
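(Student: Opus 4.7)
The plan is to handle the three claims in turn, with the main tool being the Leibniz-type identities satisfied by $d^H$ and the derived bracket.

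\emph{Tensoriality and antisymmetry of $N$.} I would first observe that the non-tensorial correction in the Courant bracket is $\Cour{fX,Y} - f\Cour{X,Y} = -(\pi Y)(f)X + 2\IP{X,Y}df$, where $\pi:\TM\to TM$ is the anchor. When $X,Y,Z\in\Gamma(\bar L)$, both $\IP{X,Y}$ and $\IP{X,Z}$ vanish by isotropy of $\bar L$, so $N$ is $C^\infty$-multilinear. Skew-symmetry in the first two entries follows from $\Cour{X,Y}+\Cour{Y,X}=2d\IP{X,Y}=0$, and skew-symmetry in the last two from the invariance axiom $\pi(X)\IP{Y,Z}=\IP{\Cour{X,Y},Z}+\IP{Y,\Cour{X,Z}}$ together with $\IP{Y,Z}=0$. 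A totally antisymmetric trilinear form on $\Gamma(\bar L)$ is a section of $\wedge^3\bar L^*$, and the pairing identifies $\bar L^*\simeq L$, giving $N\in\Gamma(\wedge^3 L)$.

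\emph{Degree restrictions and the $\pm 3$ components.} Next I would introduce, for $V\in\Gamma(\TM)$, the anticommutator $\mc{L}_V\psi := d^H(V\cdot\psi)+V\cdot d^H\psi$, which a direct calculation identifies with the twisted Lie derivative, and derive the derived-bracket identity $\mc{L}_V(W\cdot\psi) = \Cour{V,W}\cdot\psi + W\cdot\mc{L}_V\psi$. Any $\gf\in\U^k$ is locally of the form $\bar v_1\cdots\bar v_{n-k}\cdot\rho$ with $\rho$ a local generator of $U^n$ and $\bar v_i\in\Gamma(\bar L)$. Iterating $d^H(V\cdot\psi)=\mc{L}_V\psi - V\cdot d^H\psi$ together with the derivation rule would express $d^H\gf$ as a sum of terms of three kinds: (i) one in which a pair $\bar v_i\bar v_j$ has been replaced by the single Clifford factor $\Cour{\bar v_i,\bar v_j}$; (ii) one in which some $\bar v_i$ is removed and $\mc{L}_{\bar v_i}\rho$ is appended; and (iii) the tail $\pm\bar v_1\cdots\bar v_{n-k}\cdot d^H\rho$. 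Clifford action by $L$ raises $k$ by $1$ and by $\bar L$ lowers it by $1$; applying the same expansion to $\rho$ alone (empty product of $\bar v$'s) confines $d^H\rho$ to $\U^{n-1}\oplus\U^{n-3}$; and parity of the form-degree forces odd shifts. Hence $d^H:\U^k\to\U^{k-3}\oplus\U^{k-1}\oplus\U^{k+1}\oplus\U^{k+3}$. The summands landing in $\U^{k+3}$ are exactly those of type (i) in which $\Cour{\bar v_i,\bar v_j}$ is projected onto its $L$-component, which by the first step encodes $N(\bar v_i,\bar v_j,\cdot)$ under $\bar L^*\simeq L$; summing over pairs $(i,j)$ with the Clifford signs from the expansion reproduces the Clifford action of $N\in\wedge^3 L$ on $\gf$. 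The $\U^{k-3}$ component then follows by complex conjugation and equals $\bar N\cdot\gf$, and tensoriality of both extreme components is inherited from that of $N$.

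\emph{Main obstacle.} The delicate point is the final bookkeeping: the $-2$ in the definition of $N$, the $\tfrac12$ in the pairing, and the Clifford signs produced by reordering the $\bar v_i$'s in the iterated Leibniz expansion must conspire to reproduce Clifford action of $N$ exactly. A more invariant route that avoids this combinatorics, and the one I would fall back on, is to choose a generalized Hermitian connection $\grad$ on $\TM$ compatible with both $\IP{\cdot,\cdot}$ and $\J$ and write $d^H\psi = \sum_I e_I\cdot\grad_I\psi + T\cdot\psi$ for a tensorial $T\in\Gamma(\wedge^3\TM_\C)$. Then $\grad\J=0$ confines the principal piece to $\pm 1$ shifts, while $T\in\bigoplus_{p+q=3}\wedge^p L\otimes\wedge^q\bar L$ shifts by $\pm 1$ or $\pm 3$; the $\wedge^3 L$-component of $T$ is identified with $N$ directly by pairing against three sections of $\bar L$.
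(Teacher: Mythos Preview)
The paper does not actually prove this theorem; it is quoted from the companion paper \cite{cavalcantiskt}, so there is no ``paper's own proof'' to compare against.  That said, your proposal is essentially the standard argument and is sound, with one soft spot worth flagging.

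Your treatment of tensoriality and total antisymmetry of $N$ is correct and complete: the Leibniz defects of the (Dorfman) Courant bracket are killed by isotropy of $\bar L$, and the invariance axiom gives skewness in the last two slots.  The identification $\bar L^*\simeq L$ then places $N$ in $\Gamma(\wedge^3 L)$.

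The soft spot is in your primary derivation of the degree bound.  The iterated derived-bracket expansion of $d^H(\bar v_1\cdots\bar v_{n-k}\cdot\rho)$ does produce your three types of terms, and the type~(i) terms visibly land in $\U^{k+3}\oplus\U^{k+1}$.  But your sentence ``applying the same expansion to $\rho$ alone (empty product of $\bar v$'s) confines $d^H\rho$ to $\U^{n-1}\oplus\U^{n-3}$'' is empty: with no $\bar v$'s there is nothing to expand, so the base case $d^H\rho$ is exactly what is not yet controlled.  Similarly, for the type~(ii) terms you need to know where $\mc{L}_{\bar v_i}\rho$ sits, and this again feeds back into $d^H\rho$.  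So the Leibniz expansion alone does not close.

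Your fallback is precisely the fix, and it is also the argument used in \cite{cavalcantiskt}: pick any torsion-free connection on $TM$, so that $d=\sum_a e^a\wedge\nabla_{e_a}$, and then modify $\nabla$ by $A\in\Omega^1(\so{\TM})$ to a connection $\nabla'$ with $\nabla'\J=0$.  One obtains
\[
d^H\psi=\sum_a e^a\cdot\nabla'_{e_a}\psi + T\cdot\psi,\qquad T=H-\sum_a e^a\cdot A_{e_a}\in\Gamma(\wedge^1\TM\oplus\wedge^3\TM),
\]
since $e^a\cdot A_{e_a}$ lies in $\Clif^1\oplus\Clif^3$ and $H\in\wedge^3T^*M$ acts by Clifford multiplication.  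Because $\nabla'\J=0$ the principal term preserves each $\U^k$ and the prefactor $e^a\cdot$ shifts it by $\pm1$; the $\wedge^1$-part of $T$ also shifts by $\pm1$; the $\wedge^3$-part shifts by $\pm1$ or $\pm3$.  This yields the decomposition, and pairing the $\wedge^3L$-component of $T$ against $X\wedge Y\wedge Z\in\wedge^3\bar L$ recovers $-2\IP{\Cour{X,Y},Z}$ on the nose, identifying it with $N$ and settling the sign bookkeeping you were worried about.  Once the degree bound is in place, your Leibniz expansion is the cleanest way to \emph{read off} that the $\U^{k+3}$-component is Clifford action of $N$; you just should not use it to \emph{establish} the bound.
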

In particular we see from the above that involutivity of $L$ is equivalent to the vanishing of the Nijenhuis tensor which, in turn, furnishes the more usual integrability condition
\begin{equation}\label{eq:integrability}
d^H:\U^k \into \U^{k+1}\oplus \U^{k-1}.
\end{equation}
established by  Gualtieri  in \cite{gualtieri-2003}.

Of course, to determine the vanishing of the tensor $N$, or $\bar{N}$ for that matter, it is enough to show that $\bar{N}$ acts trivially in a space where the action of $\wedge^3 \bar{L}$ is faithful. For example, if
\begin{equation}\label{eq:canonical}
d^H:\U^n \into \U^{n-1},
\end{equation}
 then we conclude that $\bar{N}\equiv 0$ and the structure is integrable

If $(\G,\J_1)$ is a generalized almost  Hermitian structure, according to Theorem \ref{theo:Nijenhuis},  $d^H$ can not change either the `$p$' or  the `$q$' grading by more than three and it must switch parity. Hence $d^H$ decomposes as a sum of  eight operators and their complex conjugates
$$d^H =  \delta_++\deltabar_+ + \delta_-+\deltabar_- + N_+ +\bar{N_+}+ N_- + \bar{N_-} + N_1 +\bar{N_1}+ N_2 + \bar{N_2} + N_3 +\bar{N_3}+  N_4 +\bar{N_4};$$
\begin{equation}\label{eq:operators}
\begin{aligned}
\delta_+ :\U^{p,q} \into \U^{p+1,q+1},&\qquad \delta_-:\U^{p,q} \into \U^{p+1,q-1},\\
N_+ :\U^{p,q} \into \U^{p+3,q+3},&\qquad N_-:\U^{p,q} \into \U^{p+3,q-3},\\
N_1 :\U^{p,q} \into \U^{p-1,q+3},&\qquad N_2:\U^{p,q} \into \U^{p+1,q+3},\\
N_3 :\U^{p,q} \into \U^{p+3,q+1},&\qquad N_4:\U^{p,q} \into \U^{p+3,q-1}.
\end{aligned}
\end{equation}
and we can draw in a diagram all the possible nontrivial components of $d^H|_{\U^{p,q}}$ as arrows (see Figure \ref{fig:nonintegrable}).
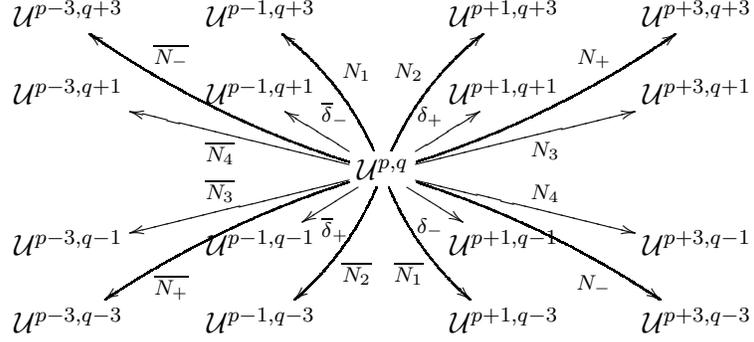
\begin{figure}[h!!]
\begin{center}
$$\xymatrix@R=14pt@C=10pt{
\U^{p-3,q+3}& &\U^{p-1,q+3} &  &\U^{p+1,q+3} &&\U^{p+3,q+3}\\
\U^{p-3,q+1}& &\U^{p-1,q+1} &  &\U^{p+1,q+1} &&\U^{p+3,q+1}\\
&&                &\ar@/ _0.5pc/[rrruu]^(0.7){N_+} \ar@/ ^0.5pc/[rrrdd]_(0.7){N_-} \ar@/ _0.5pc/[luu]_{N_1} \ar@/ ^0.5pc/[ruu]^{N_2}  \ar[rrru]_{N_3} \ar[rrrd]^{N_4}  \ar[lu]_{\deltabar_-}   \ar[ld]^{\deltabar_+} \U^{p,q}  \ar[ru]^{\delta_+}  \ar[rd]_{\delta_-}  \ar@/ _0.5pc/[rdd]_{\bar{N_1}}   \ar@/ ^0.5pc/[ldd]^{\bar{N_2}} \ar[llld]_{\bar{N_3}} \ar[lllu]^{\bar{N_4}}   \ar@/ _0.5pc/[llldd]^(0.7){\bar{N_+}} \ar@/ ^0.5pc/[llluu]_(0.7){\bar{N_-}}        &&& \\
\U^{p-3,q-1}& &\U^{p-1,q-1} &  &\U^{p+1,q-1} &&\U^{p+3,q-1}\\
\U^{p-3,q-3}& &\U^{p-1,q-3} &  &\U^{p+1,q-3} &&\U^{p+3,q-3}\\
}
$$
\caption{Representation of the nontrivial components of $d^H$ when restricted to $\U^{p,q}$ for an generalized almost Hermitian structure.}\label{fig:nonintegrable}
\end{center}
\end{figure}

\begin{definition}
The tensors $N_\pm$ and $N_{i}$, $i=1,2,3,4$ are the {\it intrinsic torsion} of the generalized almost Hermitian structure $(\G,\J_1)$.
\end{definition}

Integrability of $\J_1$ implies that $d^H$ only changes the `$p$' grading by $\pm 1$. Yet, since $\J_2$ is not integrable, $d^H$ can change the $q$ degree by $\pm1$ and $\pm 3$, hence in a generalized Hermitian manifold, $d^H$ decomposes into eight components and the Nijenhuis tensor of $\J_2$ decomposes in two components  ($N_1$ and $N_2$) as shown  in  Figure \ref{fig:genhermitian}.
\begin{center}
\begin{figure}[h!!]
$$\xymatrix@R=14pt@C=10pt{
& &\U^{p-1,q+3} &  &\U^{p+1,q+3} &&\\
& &\U^{p-1,q+1} &  &\U^{p+1,q+1} &&\\
&&                & \ar@/ _0.5pc/[luu]_{N_1} \ar@/ ^0.5pc/[ruu]^{N_2}   \ar[lu]_{\deltabar_-}   \ar[ld]^{\deltabar_+} \U^{p,q}  \ar[rd]_{\delta_-}\ar[ru]^{\delta_+}     \ar@/ _0.5pc/[rdd]_{\bar{N_1}}   \ar@/ ^0.5pc/[ldd]^{\bar{N_2}}         &&& \\
& &\U^{p-1,q-1} &  &\U^{p+1,q-1} &&\\
& &\U^{p-1,q-3} &  &\U^{p+1,q-3} &&\\
}
$$
\caption{Decomposition of $d^H$  in a generalized Hermitian manifold.}\label{fig:genhermitian}
\end{figure}
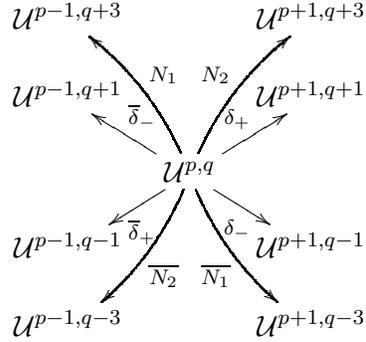
\end{center}

Integrability of $\J_2$ corresponds to the vanishing of  the Nijenhuis tensor of $\J_2$, hence for a generalized K\"ahler manifold we can decompose $d^H$ as a sum of four operators: $\delta_{\pm}$ and  their complex conjugates, as pictured in Figure \ref{fig:gkdecomposition}.
\begin{center}
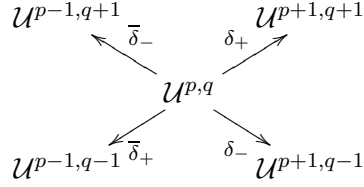
\begin{figure}[h!!]
$$\xymatrix@R=14pt@C=10pt{
& &\U^{p-1,q+1} &  &\U^{p+1,q+1} &&\\
&&                & \ar[lu]_{\deltabar_-}   \ar[ld]^{\deltabar_+}  \U^{p,q} \ar[rd]_{\delta_-}\ar[ru]^{\delta_+}              &&& \\
& &\U^{p-1,q-1} &  &\U^{p+1,q-1} &&\\
}
$$
\caption{Decomposition of $d^H$  in a generalized K\"ahler manifold.}\label{fig:gkdecomposition}
\end{figure}
\end{center}
Since $(d^H)^2=0$, we get 

\begin{corollary}\label{cor:hermitian relations}
In  a \gkm, 
\begin{equation}\label{eq:gkrelations}
\delta_{\pm}^2 =0, \qquad \{\delta_+,\delta_-\}=\{\delta_+,\deltabar_-\}=0\qquad \{\delta_+,\deltabar_+\} + \{\delta_-,\deltabar_-\} =0.
\end{equation} 
\end{corollary}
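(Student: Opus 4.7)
The plan is to extract the stated identities by expanding $(d^H)^2=0$ using the decomposition provided by Figure \ref{fig:gkdecomposition} and sorting the resulting terms by their $(p,q)$-bidegree shift. Because $\J_1$ and $\J_2$ are both integrable, Theorem \ref{theo:Nijenhuis} (applied to each) implies that on a \gkm\ the operator $d^H$ splits as
\[
d^H = \delta_+ + \deltabar_+ + \delta_- + \deltabar_-,
\]
with known $(p,q)$-bidegree shifts: $\delta_+$ shifts by $(+1,+1)$, $\delta_-$ by $(+1,-1)$, $\deltabar_+$ by $(-1,-1)$, and $\deltabar_-$ by $(-1,+1)$.

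Next I would square this sum. This yields sixteen composition terms, each preserving or changing the bidegree by fixed amounts obtained by adding the two individual shifts. Since forms of distinct bidegrees live in disjoint summands, the identity $(d^H)^2=0$ is equivalent to the vanishing of each bihomogeneous component separately. Inspecting which pairs of operators contribute to each bishift, the equation $(d^H)^2=0$ reduces to the following list: at $(\pm 2,\pm 2)$ we get $\delta_\pm^2=0$ and $\deltabar_\pm^2=0$; at $(\pm 2,0)$ we get $\{\delta_+,\delta_-\}=0$ (and its conjugate); at $(0,\pm 2)$ we get $\{\delta_+,\deltabar_-\}=0$ (and its conjugate); and at $(0,0)$, where both $\{\delta_+,\deltabar_+\}$ and $\{\delta_-,\deltabar_-\}$ land, we obtain
\[
\{\delta_+,\deltabar_+\} + \{\delta_-,\deltabar_-\} = 0.
\]

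These are precisely the relations claimed, plus the complex-conjugate relations which follow automatically because the decomposition is invariant under complex conjugation. There is no genuine obstacle: the only substantive input is the shape of the decomposition in Figure \ref{fig:gkdecomposition}, which the previous discussion has already established from the double integrability of $\J_1$ and $\J_2$; the remainder is a bookkeeping argument matching compositions by bidegree.
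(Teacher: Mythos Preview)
Your argument is correct and is exactly the approach the paper takes: the corollary is stated immediately after the sentence ``Since $(d^H)^2=0$, we get'', and your bidegree bookkeeping simply spells out what that sentence means. There is nothing to add.
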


Using the inner product \eqref{eq:inner product} one can form the adjoints of the operators $\delta_{\pm}$ and then form the corresponding Laplacians, e.g., $\triangle_{\delta_+} = \delta_+\delta_+^* + \delta_+^*\delta_+$. Particularly relevant for the deformation problem are the results of Gualtieri regarding Hodge theory on a \gkm. Using Lemma \ref{lem:star} and integration by parts Gualtieri proved:

\begin{theorem}[Gualtieri \cite{gualtieri-2004}]\label{theo:Gualtieri-Hodge} In a generalized K\"ahler manifold
$$\delta_+^* = - \deltabar_+ \qquad \delta_-^* = \deltabar_-;$$
$$\triangle_{d^H} = 4\triangle_{\delta_+} = 4 \triangle_{\delta_-}= 4\triangle_{\deltabar_+} = 4 \triangle_{\deltabar_-}.$$
In paticular, the $d^H$-Laplacian preserves the spaces $\U^{p,q}$ and the $d^H$-cohomology of a compact \gkm, inherits a $\Z^2$-grading. 
\end{theorem}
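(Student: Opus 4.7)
The strategy is to first establish the adjoint formulas $\delta_+^* = -\deltabar_+$ and $\delta_-^* = \deltabar_-$, then deduce the Laplacian equalities, and finally the Hodge decomposition of cohomology. Equip the space of complex forms with the $L^2$-Hermitian product $\IPP{\gf,\psi} = \int_M (\gf, \star \bar\psi)_{Ch}$, which is positive definite by \eqref{eq:inner product}. Since $\star = -\JJ_1\JJ_2$ by Lemma \ref{lem:star} and $\JJ_i$ acts on $U^p_{\J_i}$ as multiplication by $i^p$, the generalized Hodge star multiplies $\U^{p,q}$ by $-i^{p+q}$. Integration by parts for the Chevalley pairing, using that $H$ is closed and the behavior of the transpose under $d$ and $H\wedge$, expresses $(d^H)^*$ in terms of $d^H$ conjugated by $\star$ (and involving complex conjugation via $\bar\star$). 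Matching against the decomposition $d^H = \delta_+ + \deltabar_+ + \delta_- + \deltabar_-$: by bidegree reasons alone, $\delta_\pm^*$ must be a scalar multiple of $\deltabar_\pm$, since these are the only components of $d^H$ with the correct $(p,q)$-shifts. Careful tracking of the signs produced by $\star$, $\bar\star$ and the conjugation in the pairing then pins the scalars to $\delta_+^* = -\deltabar_+$ and $\delta_-^* = \deltabar_-$, and hence $\deltabar_+^* = -\delta_+$ and $\deltabar_-^* = \delta_-$.

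With these adjoints in hand, computing $\triangle_{d^H}$ is bookkeeping. Setting $A = \delta_+ + \deltabar_+$ and $B = \delta_- + \deltabar_-$ gives $d^H = A + B$ and $(d^H)^* = -A + B$. Using $\delta_\pm^2 = \deltabar_\pm^2 = 0$ from $(d^H)^2 = 0$,
\begin{equation*}
\triangle_{d^H} = \{d^H, (d^H)^*\} = -2A^2 + 2B^2 = -2\{\delta_+, \deltabar_+\} + 2\{\delta_-, \deltabar_-\}.
\end{equation*}
The relation $\{\delta_+,\deltabar_+\} + \{\delta_-,\deltabar_-\} = 0$ from Corollary \ref{cor:hermitian relations} collapses this to $\triangle_{d^H} = -4\{\delta_+,\deltabar_+\} = 4\{\delta_-,\deltabar_-\}$. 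On the other hand, direct expansion using the adjoint formulas gives $\triangle_{\delta_+} = -\{\delta_+,\deltabar_+\}$, $\triangle_{\delta_-} = \{\delta_-,\deltabar_-\}$, and similarly for the barred operators, so all four are equal to $\triangle_{d^H}/4$.

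Since $\delta_+$ shifts the bigrading by $(+1,+1)$ and $\delta_+^* = -\deltabar_+$ by $(-1,-1)$, each of $\delta_+\delta_+^*$ and $\delta_+^*\delta_+$ preserves $\U^{p,q}$, so $\triangle_{\delta_+}$ and hence $\triangle_{d^H}$ preserves each $\U^{p,q}$. Elliptic Hodge theory on the compact manifold $M$ then represents $d^H$-cohomology by $d^H$-harmonic forms, which decompose orthogonally into $(p,q)$-type components, each individually harmonic; this produces the claimed $\Z^2$-bigrading on $H^\bullet_{d^H}(M)$.

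The main obstacle is the first paragraph: while the overall approach via the Hodge star and integration by parts is standard, carefully disentangling the signs contributed by $\star$, $\bar\star$ and the complex conjugation in the Hermitian pairing---to land precisely on the asymmetric coefficients $-1$ and $+1$ in $\delta_+^* = -\deltabar_+$ and $\delta_-^* = \deltabar_-$---is where the proof requires genuine care. The remainder of the argument, though calculational, is essentially formal.
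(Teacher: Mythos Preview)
The paper does not actually supply a proof of this theorem; it merely attributes the result to Gualtieri and indicates the ingredients in the preceding sentence (``Using Lemma \ref{lem:star} and integration by parts Gualtieri proved''). Your proposal follows exactly this outlined route---the Chevalley pairing, $\star = -\JJ_1\JJ_2$, integration by parts for the adjoint formulas, and then the algebraic bookkeeping using Corollary \ref{cor:hermitian relations} to derive the Laplacian identities---and the computations you give for the second and third parts are correct; you also rightly flag the sign-tracking in the adjoint formulas as the only place requiring genuine care.
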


\section{Deformations of  generalized K\"ahler structures}

In this section we present a proof of Goto's  theorem on stability of \gkss. The first thing to remark is that while the usual approach to deformation of \gcss\ and Lie bi-algebroids from  \cite{gualtieri-2003,MR1472888} is useful in identifying actual deformations of a \gcs, it does so by use of endomorphisms of $\TM$ which are not orthogonal and hence are not very well suited for the study of simultaneous deformations of two or more structures. The first lemma below, a simplified re-work of Proposition 2.6 in Goto's paper \cite{MR2669364} solves this problem. As before we let $\J_1$ be a generalized complex structure and $L_1$ be its $+i$-eigenspace.

\begin{lemma}\label{lem:make it orthogonal}
For each $p \in  M $ there is a disc centered at the origin in $(\wedge^2L_1\oplus \wedge^2\bar{L_1})^\R_p$, where  $(\cdot)^\R$ denotes the real elements in the vector space, such that any maximal isotropic of $\T_p M\tensor \C$ near $L_1|_p$ corresponds to the orthogonal action of $e^a$ on $L_1$ for a unique $a$ in such disc. 
\end{lemma}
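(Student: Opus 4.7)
The plan is to show that the map $\Phi: a \mapsto e^a \cdot L_1$, where $e^a \in \Spin{\V}$ acts via Clifford conjugation as in \eqref{eq:compatibility}, is a local diffeomorphism at $a=0$ from $(\wedge^2 L_1 \oplus \wedge^2 \bar{L_1})^\R$ to the Grassmannian of maximal isotropics of $\V_\C$; existence of the disc then follows from the inverse function theorem.

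First I would set up the target. Since $\J_1$ is a \gcs, $\V_\C = L_1 \oplus \bar{L_1}$ and the natural pairing identifies $\bar{L_1}$ with $L_1^*$. Any maximal isotropic sufficiently close to $L_1$ projects isomorphically onto $L_1$ and is therefore the graph of a unique linear map $\eps : L_1 \into \bar{L_1}$; the isotropy condition $\IP{X + \eps X, Y + \eps Y}=0$ reduces to $\eps \in \wedge^2 L_1^* \cong \wedge^2 \bar{L_1}$. This parametrizes a neighborhood of $L_1$ in the Grassmannian by an open subset of $\wedge^2 \bar{L_1}$, so in particular $T_{L_1}\mathrm{Gr} \cong \wedge^2 \bar{L_1}$.

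Next I would compute $\Phi$ explicitly. For $a = b_+ + b_-$ with $b_+ \in \wedge^2 L_1$ and $b_- \in \wedge^2 \bar{L_1}$, the embedding $\wedge^2 \V_\C \hookrightarrow \Clif(\V_\C)$ as $\mathfrak{so}(\V_\C)$ combined with the isotropy of $L_1$ and $\bar{L_1}$ yields: $\mathrm{ad}_{b_+}$ annihilates $L_1$ and maps $\bar{L_1}$ into $L_1$, while $\mathrm{ad}_{b_-}$ annihilates $\bar{L_1}$ and maps $L_1$ into $\bar{L_1}$. Decomposing $e^{\mathrm{ad}_a}(X) = f_1(X) + f_2(X)$ for $X \in L_1$ along $L_1 \oplus \bar{L_1}$, one finds $f_1 = \Id + O(|a|^2)$, so for $a$ small $\Phi(a)$ is the graph of $\eps(a) := f_2 \circ f_1^{-1}$ with first-order Taylor expansion $\eps(a) = \mathrm{ad}_{b_-} + O(|a|^2)$.

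Finally, the reality condition $\bar{a} = a$ forces $b_- = \overline{b_+}$, so $(\wedge^2 L_1 \oplus \wedge^2 \bar{L_1})^\R$ is parametrized by $b_+ \in \wedge^2 L_1$ and the differential at $0$ of $\Phi$ reads $b_+ \mapsto \mathrm{ad}_{\overline{b_+}}$, the composition of complex conjugation $\wedge^2 L_1 \into \wedge^2 \bar{L_1}$ with the identification $\wedge^2 \bar{L_1} \cong T_{L_1}\mathrm{Gr}$. Both maps are real linear isomorphisms between real vector spaces of the same real dimension $2n(2n-1)$, so the inverse function theorem concludes. The main obstacle is bookkeeping: keeping the three identifications ($\wedge^2 \V_\C$ as $\mathfrak{so}$, $\bar{L_1} \cong L_1^*$ via the pairing, and the real structure on $\wedge^2 L_1 \oplus \wedge^2 \bar{L_1}$) consistent throughout the computation of the differential.
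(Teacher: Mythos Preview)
Your proof is correct and, like the paper's, ultimately appeals to the inverse function theorem, but the route is genuinely different. The paper argues abstractly via homogeneous spaces: it identifies nearby generalized complex structures with a neighbourhood of the identity coset in $SO(\T_pM)/\mathrm{Stab}(\J_1)$, observes that the stabilizer Lie algebra is $(L_1\tensor\bar{L_1})^\R$, and then invokes the standard Lie-theoretic fact that the exponential map restricted to any complement of the stabilizer subalgebra---here $(\wedge^2 L_1\oplus\wedge^2\bar{L_1})^\R$---followed by projection to the quotient is a local diffeomorphism. No computation of the differential is needed beyond the decomposition $\frak{so}(\T_pM)_\C = \wedge^2 L_1 \oplus (L_1\tensor\bar{L_1}) \oplus \wedge^2\bar{L_1}$. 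You instead chart the isotropic Grassmannian directly by graph coordinates $\eps\in\wedge^2\bar{L_1}$ and compute the differential of $a\mapsto e^a\cdot L_1$ explicitly, obtaining $b_+\mapsto \overline{b_+}$ up to the identification $\wedge^2\bar{L_1}\cong T_{L_1}\mathrm{Gr}$. The paper's argument is shorter and more conceptual; yours is more hands-on and has the advantage of making the correspondence $a\mapsto\eps(a)$ concrete, which would be useful if one later needed this map to higher order.
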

\begin{proof}
Indeed, the space of \gcss\ of the same parity as $\J_1$  on  $T_pM$ is the homogeneous space $SO(\T_pM)/\mathrm{Stab}(\J_1) \cong SO(n,n)/U(n,n)$. Hence, composing the exponential map with the projection
$$\frak{so}(\T_p M)\into SO(\T_p M) \into SO(\T_p M)/\mathrm{Stab}(\J_1)$$
gives a submersion in a \nhood\ of $0$. Since the elements in $ \frak{so}(\T_p M)$ preserving $\J_1$ are those in $(L_1\tensor \bar{L_1})^\R$, and $(\wedge^2 L_1\oplus \wedge^2\bar{L_1})^{\R}$ is a complementary subspace, we have that
$$(\wedge^2 L_1\oplus \wedge^2\bar{L_1})^{\R} \into SO(\T_p M)/\mathrm{Stab}(\J_1).$$
is a local diffeomorphism. That is, for each small deformation of a \gcs\ $\J_1$ on $V$ there is a unique element $a \in  (\wedge^2 L_1\oplus \wedge^2\bar{L_1})^{\R}$ which realizes it.
\end{proof}

Observe that for $a \in \Gamma(\wedge^2 L_1\oplus \wedge^2\bar{L_1})^\R$, the deformed \gcs\ is given by
$$\J_a = e^{a}_*\J_1 e^{-a}_*.$$

\begin{theorem}[Goto \cite{MR2669364}]\label{theo:goto}
Let $(M,H)$ be a compact manifold and $(\J_1,\J_2)$ be a generalized K\"ahler structure on $M$ such that the canonical bundle of $\J_2$ admits a nowhere vanishing closed section $\psi$. Let $\J_{1t}$ be a family of deformations of the structure $\J_1$ determined by an analytic function $a: D \into \Gamma(\wedge^2 L_1\oplus \wedge^2\bar{L_1})^\R$, where $D$ is a disc around the origin in $\C$. Then there is an analytic family of deformations, $\J_{2t}$, of $\J_2$ determined by closed forms $\psi_t$ such that $\psi_0=\psi$ and $(\J_{1t},\J_{2t})$ is a \gks\ on $M$.
\end{theorem}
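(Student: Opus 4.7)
The strategy is to find $\psi_t$ in the ansatz form $\psi_t = e^{a(t)}\cdot\phi_t$, where $\phi_t$ is an analytic family of pure spinors lying in $\bigoplus_q \U^{0,q}$ (the $\J_1$-zero-grading) with $\phi_0=\psi$. This guarantees $\psi_t\in U^0_{\J_{1t}} = e^{a(t)}\cdot U^0_{\J_1}$, which is the linear-algebraic condition that forces $\J_{1t}$ and $\J_{2t}$ to commute and thus define a generalized Hermitian pair. Positivity of $\G_t = -\J_{1t}\J_{2t}$ and non-vanishing of the Chevalley norm of $\psi_t$ are open conditions and therefore persist for small $t$, so the only nontrivial equation left to solve is the closedness $d^H\psi_t = 0$.

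Rewriting this as $D_t\phi_t = 0$ with $D_t := e^{-a(t)}\,d^H\,e^{a(t)}$, Theorem \ref{theo:Nijenhuis} applied to $\J_{1t}$ shows $d^H: U^0_{\J_{1t}}\to U^{+1}_{\J_{1t}}\oplus U^{-1}_{\J_{1t}}$, so $D_t\phi_t$ automatically lies in $\U^{+1}_{\J_1}\oplus \U^{-1}_{\J_1}$ and only those two components of the equation need to be imposed. The plan is to expand $\phi_t = \psi + \sum_{k\geq 1} t^k\phi_k$ and $a(t) = \sum_{k\geq 1} t^k a_k$, match powers of $t$, and obtain at each order a pair of linear equations
$$(\delta_+ + \deltabar_+)\phi_k = R_k^+,\qquad (\delta_- + \deltabar_-)\phi_k = R_k^-,$$
where $R_k^\pm\in\bigoplus_q \U^{\pm 1,q}$ is a polynomial expression in the previously determined $\phi_1,\ldots,\phi_{k-1}$ and $a_1,\ldots,a_k$.

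Solving these at each step is where the generalized K\"ahler hypothesis does its work. By Theorem \ref{theo:Gualtieri-Hodge} the Laplacians $\triangle_{\delta_\pm}$ agree, preserve the bigrading, and supply Green operators that respect the $\U^{p,q}$-decomposition, so one can construct $\phi_k \in \bigoplus_q \U^{0,q}$ by applying these Green operators to $R_k^\pm$, provided the harmonic parts of $R_k^\pm$ vanish. The main obstacle is proving this vanishing at every order. I would argue inductively, using the generalized K\"ahler relations $\delta_\pm^2=0$, $\{\delta_+,\delta_-\}=0$, $\{\delta_+,\deltabar_-\}=0$ and $\{\delta_+,\deltabar_+\}+\{\delta_-,\deltabar_-\}=0$ of Corollary \ref{cor:hermitian relations} to express $R_k^\pm$ as a quantity lying in the image of $\delta_\pm$ or $\deltabar_\pm$ applied to previously constructed data, whence the harmonic projector annihilates it. Convergence of the resulting formal series to an analytic family is then a routine Kodaira--Spencer-type majorization, using boundedness of the Green operator in H\"older norms together with the polynomial nature of the nonlinearity.
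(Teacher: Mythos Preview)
Your overall architecture---power series in $t$, solve order by order with the Green operators of Theorem~\ref{theo:Gualtieri-Hodge}, then Kodaira--Spencer convergence---is the paper's. The gap is in the ansatz. You take $\phi_t\in\bigoplus_q\U^{0,q}$ and ask that it be a \emph{pure} spinor, but solving your linear equations for $\phi_k\in\bigoplus_q\U^{0,q}$ via Green operators does not preserve purity: the pure spinors in $U^0_{\J_1}$ near $\psi$ form a proper subvariety (they are exactly the scalar multiples of $e^{\epsilon}\psi$ with $\epsilon\in\Gamma(V_+^{0,1}\otimes V_-^{1,0})$), and a generic Green-operator output lands off it. Without purity there is no $\J_{2t}$ and the rest of the argument has no object to speak about. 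The paper handles this by writing $\psi_t=e^{a_t}e^{b_t}\psi$ with $b_t\in\Gamma(L_1\otimes\bar L_1)^{\R}$ (this is precisely what Lemma~\ref{lem:make it orthogonal} is set up for): purity and the almost-Hermitian condition are then automatic, and the unknown at order $k$ collapses to a single $\varphi=\beta_k\psi\in\U^{0,n-2}$, with obstruction $\rho$ concentrated in the four pieces $\U^{\pm1,n-1}\oplus\U^{\pm1,n-3}$.

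Two smaller issues. With the paper's conventions the $\U^{+1}$--component of $d^H|_{\U^{0,*}}$ is $\delta_++\delta_-$, not $\delta_++\deltabar_+$, so your displayed pair of equations is misgrouped. And you assert but do not prove that $R_k^{\pm}$ has no harmonic part; the paper gets this for free because its $\rho$ is, by construction, the order-$k$ coefficient of $d^H(e^{a}e^{b_{<k}}\psi)$, hence visibly $d^H$-exact, after which $\varphi$ is written down explicitly using the Green operator and the identities of Theorem~\ref{theo:Gualtieri-Hodge}. Your inductive appeal to Corollary~\ref{cor:hermitian relations} alone does not produce that exactness.
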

\begin{proof}
The basic idea of the proof is that we can pre-compose the deformation determined by $a$ by any automorphism of $\J_1$ as this does not change the final deformation of $\J_1$. That is, once $a_t \in \Gamma(\wedge^2 L_1\oplus \wedge^2\bar{L_1})^{\R}$ is chosen, we still have $\Gamma(L_1\tensor \bar{L_1})^\R$ worth of choices on how to change $\J_2$ so that the pair is a generalized Hermitian structure. The quest then is to find $b_t \in \Gamma(L_1\tensor \bar{L_1})^\R$ such that
$$\J_{2t} = e^{a_t}e^{b_t} \J_2  e^{-b_t} e^{-a_t}$$
is integrable. This is done by induction using a power series argument. Finally, to finish the proof one must show that the series obtained converges.

As we mentioned in the proof of Lemma \ref{lem:make it orthogonal}, the elements in $\frak{so}(\TM)$ whose exponential preserve $\J_1$ are those in $\Gamma(L_1\tensor \bar{L_1})^\R$, so, for {\it any} $b:D \into \Gamma(L_1\tensor \bar{L_1})^\R$ the pair $(\J_{1t},\J_{2t})=(e^{a_t}\J_1e^{-a_t},e^{a_t} e^{b_t}\J_2e^{-b_t} e^{-a_t})$ is a generalized Hermitian structure on $M$. Since the deformed structures are obtained from $(\J_1,\J_2)$ by the exponential action of an element in $\frak{so}(\TM) = \frak{spin}(\TM)$, the corresponding decompositions of forms are related by that same transformation:
\begin{equation}\label{eq:upq relation}
U^{p,q}_t = e^{a_t}e^{b_t}U^{p,q}.
\end{equation}

Since $\J_{1t}$ is integrable for {\it any} choice of $b$ we have that, for any choice of $b$, $d^H$ splits \wrt\ $(\J_{1t},\J_{2t})$ into eight components, as depicted in Figure \ref{fig:genhermitian}, and using the isomorphism \eqref{eq:upq relation} we also have that $e^{-b_t}e^{-a_t}d^He^{a_t} e^{b_t}$ splits in eight components \wrt\ the decomposition of forms induced by $(\J_1,\J_2)$.

The map $a$ is  analytic, say $a = \sum a_j t^j$, and  we will solve for $b$ given as a  series, $ b_t = \sum (\beta_j +\overline{\beta_j})t^j$, with  and $\beta_j \in \Gamma(\wedge V^{0,1}_+\tensor V_-^{1,0})$.  Our task is to find  $b_t$ such that $d^H\psi(t) =0$. The requirement that $\psi(0)=\psi$ forces us to chose $\beta_0=0$ and with this choice we have that $d^H\psi(t)|_{t=0} = d^H\psi =0$, i.e., $d^H\psi(t)$ vanishes to order zero.

Assume by induction that we have chosen  $\beta_j$  for $j <k$ such that, as a function of $t$, $d^He^ae^{(b)_{<k}} \psi$ vanishes to order $k-1$, where $b_{<k} = \sum_{j<k} ( \beta_j + \bar{\beta_j})$ and now we choose $\beta_k$ so that $d^He^ae^{b_{<k+1}} \psi$ vanishes to order $k$. Let $\Rho$ denote the order $k$ term of $d^He^ae^{b_{<k+1}} \psi$:
$$\Rho = d^H(e^ae^{b_{<k+1}} \psi)_k.$$
Since $d^He^ae^{b_{<k+1}} \psi$ vanishes to order $k-1$,  and $(e^{-b_{<k+1}}e^{-a}-1)$ vanishes to order zero, we see that $\Rho$ is the same as the order $k$ term of $e^{-b_{<k+1}}e^{-a}d^He^ae^{b_{<k+1}} \psi$, in particular, from the description of the decomposition of $d^H$ on a generalized Hermitian manifold we conclude that  $\Rho \in \U^{1,n-1}\oplus \U^{1,n-3}\oplus \U^{-1,n-1}\oplus \U^{-1,n-3}$.

For any choice of $\beta_k$ we have that
\begin{equation}\label{eq:rho}
\Rho = d^H(\beta_k \psi) + \rho(a_1,\cdots,a_{k},b_1,\cdots,b_{k-1}),
\end{equation}
where $\rho$ takes values in $\U^{1,n-1}\oplus \U^{1,n-3}\oplus \U^{-1,n-1}\oplus \U^{-1,n-3}$:
$$ \rho = \rho^{1,n-1} +  \rho^{1,n-3} + \rho^{-1,n-1}+\rho^{-1,n-3}, \qquad \mbox{with }\rho^{p,q} \in  \U^{p,q},$$
since both $\Rho$ and $d^H(\beta_k\psi)$ lie in these spaces. Also $\rho$ is $d^H$-exact since both $\Rho$ and $d^H\beta_k$ are. So, in order to complete the inductive step, we must show that we can choose  $\beta_k$ such that
$$ d^H(\beta_k  \psi) = -\rho(a_1,\cdots,a_{k},b_1,\cdots,b_{k-1}).$$
Finally, any element in $\U^{0,n-2}$ is of the form $\beta\psi$ for some $\beta \in \Gamma(V_-^{1,0}\tensor V_+^{0,1})$, so finding $\beta_k$  is equivalent to finding $\gf \in \U^{0,n-2}$ such that $\rho = d^H \gf$.
\begin{center}
\begin{figure}[h!!]
$$\xymatrix@R=14pt@C=10pt{
& &\rho^{-1,n-1}\in \U^{-1,n-1} &  &\rho^{1,n-1} \in \U^{1,n-1} &&\\
& &                &  \ar[ld]_{\deltabar_+}  \ar[lu]^{\deltabar_-} \gf\in \U^{0,n-2} \ar[rd]^{\delta_-}  \ar[ru]_{\delta_+} &&& \\
& &\rho^{-1,n-3} \in \U^{-1,n-3} &  &\rho^{1,n-3} \in \U^{1,n-3} &&\\
}
$$
\caption{At the heart of the problem we have that $\rho\in \U^{1,n-1}\oplus \U^{1,n-3}\oplus \U^{-1,n-1}\oplus \U^{-1,n-3}$ is exact and must show that it is in $d^H(\U^{0,n-2})$.}\label{fig:the problem}
\end{figure}
\end{center}

Since  $\rho$ is $d^H$-exact, we have that $\rho = \triangle G \rho$, where $\triangle$ is the Laplacian of any of the operators $\delta_\pm$, $\deltabar_\pm$ and $G$ the corresponding Green's operator, which due to Theorem \ref{theo:Gualtieri-Hodge} does not depend on which of the four operators is used. Since $\triangle$ and $G$ preserve the spaces $\U^{p,q}$, we have that individually $\rho^{p,q} =  \triangle G \rho^{p,q}$. Further, the condition $d^H \rho =0$, among other things, implies the  following:
\begin{IEEEeqnarray}{c}
 \delta_+\rho^{1,n-1} =\deltabar_+\rho^{-1,n-3} = \delta_-\rho^{1,n-3}  = \deltabar_- \rho^{-1,n-1} = 0,\label{eq:eq0}\\
\delta_-\rho^{-1,n-1} + \deltabar_-\rho^{1,n-3} + \delta_+\rho^{-1,n-3}+ \deltabar_+\rho^{1,n-1}  =0.\label{eq:eq5}
\end{IEEEeqnarray}

Now, let
\begin{equation}\label{eq:phi}
\gf = G(\delta_- \rho^{-1,n-1}+ \deltabar_-  \rho^{1,n-3} )=-G(\delta_+ \rho^{-1,n-3} + \deltabar_+  \rho^{1,n-1}) \in \U^{0,n-2},\\
\end{equation}
where the identity for the two expressions for $\gf$ follows from \eqref{eq:eq5}. Then we compute the different components of $d^H\gf$. We start with the $\U^{-1,n-3}$-component, which is given by $\deltabar_+\gf$:
$$\deltabar_+\gf =- G(\deltabar_+\delta_+ \rho^{-1,n-3}) = - G\circ (\deltabar_+\delta_+ + \delta_+ \deltabar_+)(\rho^{-1,n-3}) = G \triangle_{\delta_+}\rho^{-1,n-3} =\rho^{-1,n-3},$$
where in the first equality we used the second expression for $\gf$, in the second equality we used \eqref{eq:eq0} and in the third and fourth we used Theorem \ref{theo:Gualtieri-Hodge}. The remaining components follow the same paradigm and we get  $\rho = d^H \gf$, which completes the induction step.

Proof of convergence uses standard elliptic estimate arguments and is done along the same lines of Kodaira and Spencer's original argument for deformations of complex structures (c.f. Section 5.3 (c) in \cite{MR815922}). The main points of the argument being that one can (inductively) bound the $(l-1,\alpha)$-Holder norm of the function $\rho$ from \eqref{eq:rho},  by the $(l,\alpha)$-Holder norm of the functions $a_i$ for $i\leq k$, as $\rho$ depends  on $a_i$, $b_i$ and their first derivative.   Hence, due the smoothing properties of the Green operator, the $(l,\alpha)$-norm of $\gf$ defined in \eqref{eq:phi} (and consequently of $b_k$) is also bounded by the $(l,\alpha)$-norm of the functions $a_i$ for $i\leq k$. Convergence of $\sum a_i t^i$ then implies convergence of $\sum b_i t^i$ in a possibly smaller radius.

\end{proof}

\providecommand{\bysame}{\leavevmode\hbox to3em{\hrulefill}\thinspace}
\providecommand{\MR}{\relax\ifhmode\unskip\space\fi MR }
\providecommand{\MRhref}[2]{%
  \href{http://www.ams.org/mathscinet-getitem?mr=#1}{#2}
}
\providecommand{\href}[2]{#2}

\end{document}